\newtheorem{teo}{Theorem}[section]
\newtheorem{lemma}{Lemma}[section]
\newtheorem{cor}{Corollary}[section]
\newtheorem{oss}{Remark}[section]
\newcommand{\ra}{\longrightarrow}
\newcommand{\dis}{\displaystyle}
\newcommand{\bs}{\backslash}
\newcommand{\ov}{\overline}
\newcommand{\0}{\emptyset}
\newcommand{\ph}{\varphi}
\newenvironment{M}[1]{
	\left( \begin{array}{#1}}
	{\end{array}\right)}
\newenvironment{Si}[1]{\left\{\begin{array}{#1}}{\end{array} \right. }
\title{ }
\author{Gabriele Mancini}
\title{A note on compactness properties of the singular Toda system}
\author{Luca Battaglia, Gabriele Mancini}
\affil{\small{S.I.S.S.A., Via Bonomea 265, 34136 Trieste (Italy)\\e-mail: \textit{lbatta@sissa.it}, \textit{gmancini@sissa.it}}}
\date{}
\begin{document}
\maketitle
\begin{abstract}
In this note, we consider blow-up for solutions of the $SU(3)$ Toda system on a compact surface $\Sigma$. In particular, we give a complete proof of the compactness result stated by Jost, Lin and Wang in \cite{jlw} and we extend it to the case of singularities. This is a necessary tool to find solutions through variational methods.
\end{abstract}

\section{Introduction}

Let $(\Sigma,g)$ be a smooth, compact Riemannian surface. We consider the $SU(3)$ Toda system on $\Sigma$:
\begin{equation}
\label{toda1}-\Delta u_{i} = \sum_{j=1}^2 a_{ij}\rho_{j} \left(  \frac{V_{j} e^{u_{j}}}{\int_{\Sigma}V_{j} e^{u_{j}}dv_g}  -\frac{1}{|\Sigma|} \right)-4\pi\sum_{j=1}^l\alpha_{ij}\left(\delta_{p_j}-\frac{1}{|\Sigma|}\right)\quad\quad i=1,2
\end{equation}
with $\rho_i>0$, $0<V_i\in C^\infty(\Sigma)$, $\alpha_{ij}>-1$, $p_j\in\Sigma$ given and
$$
A=(a_{ij})= \begin{pmatrix}
2 & -1\\
-1 & 2
\end{pmatrix}
$$
is the $SU(3)$ Cartan matrix.

The Toda system is widely studied in both geometry (description of holomorphic curves in $\mathbb C\mathbb P^N$, see e.g. \cite{bw,cal,cw}) and mathematical physics (non-abelian Chern-Simons vortices theory, see \cite{dunne,tar08,yang}).

In the regular case, Jost, Lin and Wang \cite{jlw} proved the following important mass-quantization result for sequences of solutions of $\eqref{toda1}$.

\begin{teo}
\label{jlw}
Suppose $\alpha_{ij}=0$ for any $i,j$ and let $u_n=(u_{1,n},u_{2,n})$ be a sequence of solutions of $\eqref{toda1}$ with $\rho_i=\rho_{i,n}$. Define, for $x\in\Sigma$, $\sigma_1(x),\sigma_2(x)$ as
\begin{equation}
\label{sigmai}
\sigma_i(x):=\lim_{r\to0}\lim_{n\to+\infty}\rho_{i,n}\frac{\int_{B_r(x)}V_{i} e^{u_{i,n}}dv_g}{\int_{\Sigma}V_{i} e^{u_{i,n}}dv_g}.
\end{equation}
Then,
\begin{equation}
\label{sigma}
(\sigma_1(x),\sigma_2(x))\in\{(0,0),(0,4\pi),(4\pi,0),(4\pi,8\pi),(8\pi,4\pi),(8\pi,8\pi)\}.
\end{equation}
\end{teo}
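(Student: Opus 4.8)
\emph{Proof strategy.} The plan is to run a blow-up analysis for the sequence $u_n=(u_{1,n},u_{2,n})$, localised at the concentration points of the measures $\mu_{i,n}:=\rho_{i,n}\frac{V_ie^{u_{i,n}}}{\int_\Sigma V_ie^{u_{i,n}}\,dv_g}$. After normalising $\int_\Sigma u_{i,n}\,dv_g=0$ and passing to a subsequence so that $\rho_{i,n}\to\rho_i$ and $\mu_{i,n}\rightharpoonup\mu_i$ weakly-$\ast$, a Brezis--Merle/Harnack-type alternative applied to each scalar equation (using $0<V_i\in C^\infty(\Sigma)$ and the boundedness of the total masses) shows that the blow-up sets $S_i:=\{x\in\Sigma:\ u_{i,n}\ \text{is unbounded near}\ x\}$ are finite, that $S_i$ is exactly the atomic part of $\mu_i$, and that $u_{i,n}$ is bounded in $C^2_{loc}$ away from $S:=S_1\cup S_2$. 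In particular the sequence is compact off $S$, so $(\sigma_1(x),\sigma_2(x))=(0,0)$ there, and the whole statement reduces to the local claim that for every $x_0\in S$ the pair $(\sigma_1(x_0),\sigma_2(x_0))$ lies in the list \eqref{sigma}.

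Fix $x_0\in S$ and a small disc $B_r(x_0)$ meeting $S$ only at $x_0$. Subtracting off the harmonic part matching the boundary data and the constant terms, one reduces to a local system $-\Delta z_{i,n}=\sum_ja_{ij}\mu_{j,n}$ on $B_r(x_0)$ with $z_{i,n}=0$ on $\partial B_r(x_0)$, where $\mu_{j,n}$ concentrates at $x_0$ with total mass converging to $\sigma_j(x_0)$ as $n\to\infty$ and then $r\to0$, and where the neglected terms stay bounded in $C^2$ near $\partial B_r(x_0)$. The first main ingredient is then a Pohozaev-type identity adapted to the Cartan matrix: testing the equations against $(x-x_0)\cdot\nabla z_{i,n}$ in the combination weighted by $A$ (equivalently, exploiting that the natural energy of the system is $\frac12\int\sum_{i,j}a_{ij}\nabla z_{i,n}\cdot\nabla z_{j,n}\,dx$), integrating over $B_r(x_0)$, controlling all boundary integrals, and passing to the limit in $n$ and then $r$, yields the quadratic relation
$$\sigma_1(x_0)^2-\sigma_1(x_0)\sigma_2(x_0)+\sigma_2(x_0)^2=4\pi\big(\sigma_1(x_0)+\sigma_2(x_0)\big).$$
Writing $\sigma_i(x_0)=4\pi a_i$, this reads $a_1^2-a_1a_2+a_2^2=a_1+a_2$, whose only nonnegative integer solutions are $(0,0),(1,0),(0,1),(1,2),(2,1),(2,2)$, i.e.\ precisely \eqref{sigma}. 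Since this ellipse also contains many non-lattice points, the proof is completed once one shows that $\sigma_1(x_0)$ and $\sigma_2(x_0)$ are nonnegative integer multiples of $4\pi$.

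To obtain this quantization one classifies the blow-up profiles at $x_0$ by a rescaling argument. If only $u_{1,n}$ concentrates at $x_0$ (so $\sigma_2(x_0)=0$), then $u_{2,n}$ is bounded near $x_0$, the coupling $-\mu_{2,n}$ is a harmless bounded term, and dilating around a maximum point of $u_{1,n}$ produces in the limit an entire solution of $-\Delta U=2e^U$ on $\mathbb{R}^2$ with finite mass; by the Chen--Li classification $\int_{\mathbb{R}^2}e^U=4\pi$, and a Li--Shafrir-type neck estimate rules out residual mass, so $\sigma_1(x_0)=4\pi$ (consistent with $\sigma_1^2=4\pi\sigma_1$). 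If both components concentrate, one runs a selection-of-bubbles procedure: at the finest blow-up scale the slower component is essentially frozen, so one component solves in the limit the Liouville equation $-\Delta U=2e^U$, while at the coarser scales the mass already concentrated by a faster component appears as a Dirac source $-4\pi m\,\delta_0$ (with $m\in\mathbb{N}$ by induction) in the other equation, turning it into a singular Liouville equation whose mass is again forced into $4\pi\mathbb{N}$ by the classification of its entire solutions; alternatively, when both components bubble at the same scale, the limit is an entire finite-energy solution of the $SU(3)$ Toda system, all of which have both masses equal to $8\pi$. Intersecting the list of possible totals with the quadratic relation leaves exactly \eqref{sigma}.

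I expect the technical core --- and precisely the point only sketched in \cite{jlw} --- to be the rigorous implementation of the localisation together with the control of the boundary fluxes when passing to the limit in the Pohozaev identity (one must show that the $z_{i,n}$ stay well-behaved near $\partial B_r(x_0)$ and that the coupling between the two components produces no uncontrolled flux through the circle), and, when the two components concentrate at comparable but distinct rates, the execution of the multi-scale bubbling analysis while excluding energy leaking through the necks between successive scales, which is what ultimately guarantees that no intermediate, non-quantised masses can occur.
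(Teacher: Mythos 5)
Your sketch cannot be compared with a proof ``in the paper'' because this note does not prove Theorem \ref{jlw} at all: the statement is quoted from \cite{jlw} (and its singular counterpart from \cite{lwz}), and the note's own arguments (Lemmas \ref{green}--\ref{ellisse} and Corollary \ref{rhoi}) are only aimed at deducing the compactness Theorems \ref{comp} and \ref{compsing} from it. Measured against the actual proof in \cite{jlw}, your outline reproduces the right architecture: a Brezis--Merle/concentration--compactness step (here Theorem \ref{ln}), a local Pohozaev identity giving $\sigma_1^2+\sigma_2^2-\sigma_1\sigma_2=4\pi\left(\sigma_1+\sigma_2\right)$ (here Lemma \ref{ellisse}, attributed to \cite{jw,lwz}), and a classification of blow-up profiles.

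The genuine gap is that the decisive step --- showing that the local masses are quantized, i.e.\ $\sigma_i(x_0)\in4\pi\mathbb N$, or directly that the pair lies in the finite list \eqref{sigma} --- is asserted rather than proved, and that step \emph{is} the content of \cite{jlw}. The ellipse relation alone admits a continuum of non-lattice solutions, so without quantization you get no finite list; and the quantization cannot be obtained by a routine adaptation of the scalar Li--Shafrir argument. Your ``selection of bubbles'', the claim that the faster component enters the slower equation as a Dirac source $4\pi m\,\delta_0$ with $m\in\mathbb N$ ``by induction'', and above all the exclusion of mass on the necks between scales (partial blow-up, one component decaying while the other still carries mass) are precisely the points where the system structure breaks the scalar toolkit and where \cite{jlw} (and, for the singular case, \cite{lwz}) had to work hardest; naming them as ``the technical core'' does not discharge them. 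A smaller but real omission: passing to the limit in the Pohozaev identity requires control of the non-atomic part $r_i$ of the limit measure near $x_0$ (in this note supplied by Lemma \ref{residuo}, giving $s_i\in W^{2,p}$), not only of the harmonic part matching the boundary data; your boundary-flux discussion does not address this, and in general $r_i$ need not vanish (cf.\ the discussion after Theorem \ref{ln}). As it stands, the proposal is a plausible roadmap consistent with the literature, but not a proof of Theorem \ref{jlw}.
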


In the same paper, the authors state that Theorem $\ref{jlw}$ immediately implies the following compactness result.

\begin{teo}
\label{comp}
Suppose $\alpha_{ij}=0$ for any $i,j$ and let $K_1,K_2$ be compact subsets of $\mathbb R^+\backslash4\pi\mathbb N$. Then, the space of solutions of $\eqref{toda1}$ with $\rho_i\in K_i$ satisfying $\int_\Sigma u_idv_g=0$ is compact in $H^1(\Sigma)$.
\end{teo}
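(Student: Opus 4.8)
The plan is to argue by contradiction, reducing non-compactness to blow-up and then invoking Theorem~\ref{jlw}. Suppose the statement fails: then there are $\rho_{i,n}\in K_i$ and solutions $u_n=(u_{1,n},u_{2,n})$ of $\eqref{toda1}$ (with $\alpha_{ij}=0$) satisfying $\int_\Sigma u_{i,n}\,dv_g=0$ which admit no $H^1(\Sigma)$-convergent subsequence; by compactness of the $K_i$ we may assume $\rho_{i,n}\to\bar\rho_i\in K_i\subset\mathbb R^+\setminus4\pi\mathbb N$. First I would show that non-compactness forces $\sup_\Sigma u_{i,n}\to+\infty$ for at least one $i$: if instead $\sup_\Sigma u_{i,n}$ stayed bounded for both $i$, then Jensen's inequality and the normalization give $\int_\Sigma V_ie^{u_{i,n}}\,dv_g\ge(\min_\Sigma V_i)|\Sigma|$, so the right-hand side of $\eqref{toda1}$ is bounded in $L^\infty(\Sigma)$ and, together with $\int_\Sigma u_{i,n}\,dv_g=0$, standard elliptic estimates give a uniform $W^{2,p}(\Sigma)$ bound, hence precompactness in $C^1(\Sigma)$, a contradiction. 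So, up to relabelling and a subsequence, $\sup_\Sigma u_{1,n}\to+\infty$, and the Brezis--Merle type blow-up analysis for the Toda system developed in \cite{jlw} applies: after a further subsequence the limits in $\eqref{sigmai}$ exist and define $\sigma_i(x)$, the set $S:=\{x\in\Sigma:(\sigma_1(x),\sigma_2(x))\neq(0,0)\}$ is finite (since $\sigma_1(x)+\sigma_2(x)\ge4\pi$ on $S$ by $\eqref{sigma}$, while $\sum_{x\in S}(\sigma_1(x)+\sigma_2(x))\le\bar\rho_1+\bar\rho_2$) and nonempty (because $\sup_\Sigma u_{1,n}\to+\infty$ on the compact $\Sigma$), and off $S$ each $u_{i,n}$ is locally uniformly bounded above.

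Next I would analyse the weak limits of the mass densities. Set $\mu_{i,n}:=\rho_{i,n}\frac{V_ie^{u_{i,n}}}{\int_\Sigma V_ie^{u_{i,n}}\,dv_g}\,dv_g$, a measure of total mass $\rho_{i,n}$; along a subsequence $\mu_{i,n}\rightharpoonup\mu_i$ weakly-$*$, with $\mu_i(\Sigma)=\bar\rho_i$ and $\mu_i(\{x\})=\sigma_i(x)$ for $x\in S$ by $\eqref{sigmai}$. On $\Sigma\setminus S$ the sequence $u_{i,n}$ is locally bounded above, hence by a Harnack-type alternative it either converges in $C^2_{\mathrm{loc}}(\Sigma\setminus S)$ or tends to $-\infty$ locally uniformly; the second option is ruled out by $\int_\Sigma u_{i,n}\,dv_g=0$ (on a bubbling sequence it would force $\int_\Sigma u_{i,n}\,dv_g\to-\infty$), so $u_{i,n}\to u_{i,\infty}$ in $C^2_{\mathrm{loc}}(\Sigma\setminus S)$ for both $i$. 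Then $\mu_i=\sum_{x\in S}\sigma_i(x)\delta_x+h_i\,dv_g$ with $0\le h_i\in L^1(\Sigma)$, and $h_i\equiv0$ precisely when $\int_\Sigma V_ie^{u_{i,n}}\,dv_g\to+\infty$. Passing to the limit in the Green representation $u_{i,n}=\sum_{k=1,2}a_{ik}\int_\Sigma G(\cdot,y)\,d\mu_{k,n}(y)$ and using the Cartan matrix $A$, the function $u_{i,\infty}$ has at each $x_0\in S$ a logarithmic singularity of strength $2\sigma_i(x_0)-\sigma_j(x_0)$ (with $j\neq i$), namely $u_{i,\infty}(x)=-\tfrac{1}{2\pi}\bigl(2\sigma_i(x_0)-\sigma_j(x_0)\bigr)\log\dist(x,x_0)+O(1)$ near $x_0$.

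Finally I would reach the contradiction via the quantization $\eqref{sigma}$. Fix $x_0\in S$: then $(\sigma_1(x_0),\sigma_2(x_0))$ is one of $(0,4\pi),(4\pi,0),(4\pi,8\pi),(8\pi,4\pi),(8\pi,8\pi)$, and in each of these cases at least one index $i$ satisfies $2\sigma_i(x_0)-\sigma_j(x_0)\ge4\pi$ (the relevant values are $8\pi,8\pi,12\pi,12\pi,8\pi$ respectively). For such $i$ the singularity in the previous step forces $e^{u_{i,\infty}}\notin L^1$ in a neighbourhood of $x_0$; by Fatou's lemma this is incompatible with $\int_\Sigma V_ie^{u_{i,n}}\,dv_g$ staying bounded, so $\int_\Sigma V_ie^{u_{i,n}}\,dv_g\to+\infty$, whence $h_i\equiv0$ and $\mu_i=\sum_{x\in S}\sigma_i(x)\delta_x$. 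Taking total masses gives $\bar\rho_i=\sum_{x\in S}\sigma_i(x)\in 4\pi\mathbb N\cup\{0\}$ by $\eqref{sigma}$, which contradicts $\bar\rho_i\in K_i\subset\mathbb R^+\setminus4\pi\mathbb N$, completing the proof.

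The hard part will not be this final counting but the blow-up analysis used in the first two steps: local boundedness from above off $S$, the $C^2_{\mathrm{loc}}$-versus-$(-\infty)$ dichotomy together with the exclusion of the $-\infty$ alternative through the normalization, and enough regularity of the residual densities $h_i$ to justify the logarithmic expansion of $u_{i,\infty}$ near $S$ (a short bootstrap should show each $h_i$ is either identically zero or in $L^p$ for some $p>1$, which is exactly what is needed to control the terms $G\ast h_j$ in the Green representation). This coupled Brezis--Merle / Harnack-type machinery is what the present note makes precise, and the same scheme should carry over to the singular case once the Green function of $\eqref{toda1}$ and the admissible set of pairs $(\sigma_1,\sigma_2)$ are adjusted for the points $p_j$ and the weights $\alpha_{ij}$.
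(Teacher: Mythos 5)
Your overall skeleton is sound and, at the structural level, parallels the paper: reduce non-compactness to blow-up, show that the residual density of at least one component vanishes, conclude that the corresponding $\ov\rho_i$ is a sum of local masses and hence lies in $4\pi\mathbb N$, contradicting $K_i\Subset\mathbb R^+\setminus4\pi\mathbb N$. Your use of the explicit list in $\eqref{sigma}$ to find, at a fixed $x_0\in S$, an index $i$ with $2\sigma_i(x_0)-\sigma_j(x_0)\ge4\pi$ is a legitimate shortcut in the regular case (the paper instead derives this from the Pohozaev identity of Lemma~\ref{ellisse} plus an elementary algebraic argument, which is what allows the extension to the singular case). The exclusion of the $-\infty$ alternative via the zero-average normalization and the $W^{1,q}$ bound is also fine.

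The genuine gap is the step ``$2\sigma_i(x_0)-\sigma_j(x_0)\ge4\pi$ at some blow-up point $\Rightarrow h_i\equiv0$'', i.e.\ exactly Lemma~\ref{residuo} of the paper (the extension of Chae--Ohtsuka--Suzuki). Your expansion $u_{i,\infty}=-\frac{2\sigma_i-\sigma_j}{2\pi}\log\dist(\cdot,x_0)+O(1)$ is not available a priori: in the Green representation the other component enters with the negative Cartan entry $a_{ij}=-1$, so the term $-\int_\Sigma G(\cdot,y)h_j(y)\,dv_g(y)$ can ruin the lower bound near $x_0$ when $h_j$ is only known to lie in $L^1$, and the lower bound is what you need for the Fatou/non-integrability argument. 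You defer this to ``a short bootstrap'' giving $h_i\in L^p$, $p>1$, but that bootstrap is circular as stated: to bound $h_j=c_jV_je^{u_{j,\infty}}$ you need an upper bound on $u_{j,\infty}$, whose representation contains $+2\,G\ast h_j$ again. This regularity of the residuals is precisely the hard content of the note: the paper obtains it by exploiting the sign structure of $A$ (e.g.\ the combination $\frac{2s_1+s_2}{3}$, whose Laplacian has a sign, is bounded below since $G\ge-C$), then a convexity argument forces $\min\{\widehat V_1,\widehat V_2\}\in L^1$, which together with the Pohozaev identity yields the subcritical exponent for some index at each point and only then a local elliptic bootstrap. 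If you want to stay closer to your route in the regular case, you can close the gap without full $L^p$ regularity by a local Brezis--Merle small-mass estimate: since $h_j$ has no atom at $x_0$, $e^{G\ast(h_j\mathbf 1_{B_r(x_0)})}\in L^q(B_r(x_0))$ for arbitrarily large $q$ when $r$ is small, and since in all cases of $\eqref{sigma}$ the relevant exponent satisfies $\frac{2\sigma_i-\sigma_j}{2\pi}\ge4>2$, a Hölder argument comparing with $\int_{B_r(x_0)}\dist(\cdot,x_0)^{-2}dv_g=+\infty$ still forces $e^{u_{i,\infty}}\notin L^1(B_r(x_0))$; but some such argument must be supplied, and as written your proof omits the decisive estimate.
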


Theorem $\ref{comp}$ is a necessary step to find solutions of $\eqref{toda1}$ by variational methods, as was done in \cite{bjmr,mn,mr13}.\\
Although Theorem $\ref{comp}$ has been widely used, it was not explicitly proved how it follows from Theorem $\ref{jlw}$. Recently, in \cite{lwy}, a proof was given in the case $\rho_1<8\pi$.\\
The purpose of this note is to give a complete proof of Theorem $\ref{comp}$, extending it to the singular case as well. Actually, the proof follows quite directly from \cite{COS}.

In the presence of singularities, that is when we allow the $\alpha_{ij}$ to be non-zero, it is convenient to write the system $\eqref{toda1}$ in an equivalent form through the following change of variables:

$$u_{i}\to u_{i}+4\pi\sum_{j=1}^l\alpha_{ij}G_{p_j}\quad\quad\quad\mbox{where }G_p\mbox{ solves }\left\{\begin{array}{l}-\Delta G_p=\delta_p-\frac{1}{|\Sigma|}\\\int_\Sigma G_pdv_g=0\end{array}\right..$$
The new $u_{i}$'s solve
\begin{equation}
\label{toda}-\Delta u_{i} = \sum_{j=1}^2 a_{ij}\rho_{j} \left(  \frac{\widetilde V_{j} e^{u_{j}}}{\int_{\Sigma}\widetilde V_{j} e^{u_{j}}dv_g}  -\frac{1}{|\Sigma|} \right)\quad\quad i=1,2.
\end{equation}
with
$$\widetilde V_i=\Pi_{j=1}^le^{-4\pi\alpha_{ij}G_{p_j}}V_i\quad\quad\Rightarrow\quad\quad\widetilde V_i\sim d(\cdot,p_j)^{2\alpha_{ij}}\quad\mbox{near }p_j.$$

In this case, we still have an analogue of Theorem $\ref{jlw}$ for the newly defined $u_i$. The finiteness of the local blow-up values has been proved in \cite{lwz}.\\
We will also show how this quantization result implies compactness of solutions outside a closed, zero-measure set of ${\mathbb R^+}^2$.

\begin{teo}
\label{compsing}
There exist two discrete subset $\Lambda_1,\Lambda_2\subset\mathbb R^+$, depending only on the $\alpha_{ij}$'s, such that for any $K_i\Subset\mathbb R^+\backslash\Lambda_i$, the space of solutions of $\eqref{toda1}$ with $\rho_i\in K_i$ satisfying $\int_\Sigma u_idv_g=0$ is compact in $H^1(\Sigma)$.
\end{teo}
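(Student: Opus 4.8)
The plan is to reduce Theorem \ref{compsing} to the mass–quantization result (the singular analogue of Theorem \ref{jlw}) exactly as one does in the regular case, and then to track how the set of admissible limit masses depends on the $\alpha_{ij}$. First I would recall the standard compactness dichotomy for \eqref{toda}: given a sequence of solutions $u_n=(u_{1,n},u_{2,n})$ with $\rho_{i,n}\to\bar\rho_i$ and $\int_\Sigma u_{i,n}\,dv_g=0$, by elliptic estimates and the Brezis–Merle alternative (in the form adapted to the Toda system), either $u_n$ is bounded in $W^{2,p}$ for some $p>1$ — in which case it converges in $H^1(\Sigma)$ up to a subsequence and we are done — or blow-up occurs at a finite set of points, and at each such point $x$ the local masses $\sigma_i(x)$ defined as in \eqref{sigmai} are well defined. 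The key point is that, by the quantization result of \cite{lwz}, $(\sigma_1(x),\sigma_2(x))$ can only take finitely many values, all of which are explicit linear combinations of $4\pi$ and of the weights $4\pi(1+\alpha_{ij})$ coming from the Dirac data; in particular the total masses $\rho_{i,n}\to\bar\rho_i$ are forced to lie in the (finite, or at least discrete) set obtained by summing the admissible local masses over the blow-up points, together with the ``regular part'' contribution.

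The second step is to package this. I would define
\[
\Lambda_i:=\left\{\,\sum_{k}\sigma_i(x_k)\ :\ \{x_k\}\subset\Sigma\text{ a finite blow-up set, }(\sigma_1(x_k),\sigma_2(x_k))\text{ admissible}\,\right\},
\]
observing that since each point contributes a value from a fixed finite list (depending only on the $\alpha_{ij}$, via the finitely many weights $\alpha_{ij}$ at the finitely many $p_j$, plus the regular value $4\pi\mathbb N$), the resulting set of totals is discrete in $\mathbb R^+$: only finitely many blow-up points can accumulate positive mass below any fixed bound, so $\Lambda_i\cap[0,C]$ is finite for every $C$. Then if $K_i\Subset\mathbb R^+\setminus\Lambda_i$, no blow-up can occur along any sequence with $\rho_{i,n}\in K_i$ — indeed the limit $\bar\rho_i$ would have to lie in $\Lambda_i$, contradicting $\bar\rho_i\in K_i$ (closed) $\subset\mathbb R^+\setminus\Lambda_i$. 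Hence the first alternative always holds and the solution space is compact in $H^1(\Sigma)$. Translating back from \eqref{toda} to \eqref{toda1} is harmless, since the change of variables $u_i\mapsto u_i+4\pi\sum_j\alpha_{ij}G_{p_j}$ is a fixed affine isomorphism of $H^1(\Sigma)$ preserving the normalization $\int_\Sigma u_i\,dv_g=0$ (because $\int_\Sigma G_{p_j}\,dv_g=0$).

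The main obstacle — and the reason this note is needed — is the passage from ``finitely many local blow-up values'' (the content of \cite{lwz}, analogous to \eqref{sigma}) to the precise statement that blow-up forces $\bar\rho_i\in\Lambda_i$ with $\Lambda_i$ \emph{discrete}. Two subtleties must be handled carefully. First, one must rule out an infinite blow-up set or, more precisely, show that the number of points carrying nonzero $\sigma_i$-mass is bounded in terms of the total mass; this follows from the fact that the admissible nonzero local masses are bounded below by a positive constant (again depending only on the $\alpha_{ij}$), so that $\sharp\{x:\sigma_1(x)+\sigma_2(x)>0\}\le(\bar\rho_1+\bar\rho_2)/c$. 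Second, one must verify that $\bar\rho_i$ equals exactly the sum of local masses — i.e. that no mass is lost in the ``neck'' or spread out over $\Sigma$ — which is the standard but non-trivial fact that in this critical dimension the only possible concentration is at points, with no residual $L^1$-part surviving in the limit once blow-up has occurred; here one invokes the structure of \cite{COS}, as the authors announce. Once these two facts are in place, the discreteness of $\Lambda_i$ and the compactness conclusion are immediate.
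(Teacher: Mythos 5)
Your overall skeleton (dichotomy between uniform bounds and blow-up, finiteness of the blow-up set via the lower bound on local masses, discreteness of the set of admissible total masses, and the harmless change of variables with $\int_\Sigma G_{p_j}dv_g=0$) is the same as the paper's final argument. But there is a genuine gap exactly at the step you label as your ``second subtlety'': the claim that once blow-up occurs ``no residual $L^1$-part survives in the limit'' is neither proved in your proposal nor true as you state it. In the concentration--compactness alternative (Theorem \ref{ln}) the weak limit of $\widetilde V_ie^{w_{i,n}}$ is $r_i+\sum_{x\in S_i}\sigma_i(x)\delta_x$, and it can indeed happen that one of the residuals $r_i$ is \emph{not} identically zero even though blow-up occurs: the paper points to \cite{dpr} for such an example. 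What is actually true, and what the theorem needs, is the weaker statement that \emph{at least one} of $r_1,r_2$ vanishes, so that $\ov\rho_i=\sum_{x\in S_i}\sigma_i(x)$ for at least one index $i$ (Corollary \ref{rhoi}); this suffices precisely because both $K_1$ and $K_2$ are assumed to avoid the respective sets $\Lambda_1,\Lambda_2$. Your appeal to ``the structure of \cite{COS}'' cannot close this gap: \cite{COS} concerns the regular case, and even there the vanishing of a residual is not a standard off-the-shelf fact but requires an argument; in the singular setting it is exactly the content of the note you are asked to prove.

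Concretely, the missing chain is: the Green's-function representation of $w_{i,n}-\ov w_{i,n}$ (Lemma \ref{green}), the dichotomy for the averages $\ov w_{i,n}$ (Lemma \ref{media}), the extension of Chae--Ohtsuka--Suzuki showing that $\sum_{j}a_{ij}\sigma_j(x_0)\ge 4\pi(1+\alpha_i(x_0))$ at some $x_0\in S_i$ forces $r_i\equiv0$ (Lemma \ref{residuo}, via the lower bound on $s_i$ and the local integrability of $r_i\sim d(\cdot,x_0)^{2\alpha_i(x_0)-\frac{1}{2\pi}\sum_j a_{ij}\sigma_j(x_0)}$), the Pohozaev identity for the local masses (Lemma \ref{ellisse}), and the algebraic incompatibility showing that at a common blow-up point the two strict inequalities $\sum_j a_{ij}\sigma_j(x_0)<4\pi(1+\alpha_i(x_0))$, $i=1,2$, contradict the Pohozaev relation. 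Only after this does one get $\ov\rho_i\in\Lambda_i$ for some $i$, with $\Lambda_i=\left\{4\pi k+\sum_{j=1}^l n_j\sigma_j\right\}$ discrete, and hence the contradiction with $\rho_{i,n}\in K_i\Subset\mathbb R^+\setminus\Lambda_i$. Your final discreteness and compactness conclusions are fine once this is in place, but as written the proposal assumes the central analytic fact rather than proving it, and assumes it in a form (both residuals vanish) that is false in general.
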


As in the regular case, Theorem $\ref{compsing}$ has an important application in the variational analysis of $\eqref{toda1}$, see for instance \cite{bjmr,bat}.

\section{Proof of the main results}

Let us consider a sequence $u_n$ of solutions of $\eqref{toda1}$ with $\rho_i=\rho_{i,n}\underset{n\to+\infty}\to\ov\rho_i$ and let us define
\begin{equation}
\label{wi}
w_{i,n}:=u_{i,n}-\log\int_{\Sigma}\widetilde V_{i} e^{u_{i,n}}dv_g+\log\rho_{i,n},
\end{equation}
which solves
\begin{equation}
\label{eqw}
\left\{\begin{array}{l}-\Delta w_{i,n} = \sum_{j=1}^2 a_{ij}\left(\widetilde V_{j} e^{w_{j,n}}-\frac{\rho_{j,n}}{|\Sigma|} \right);\\
\int_\Sigma\widetilde V_ie^{w_{i,n}}dv_g=\rho_{i,n}\end{array}\right.
\end{equation}
moreover,
$$\sigma_i(x)=\lim_{r\to0}\lim_{n\to+\infty}\int_{B_r(x)}\widetilde V_{i} e^{w_{i,n}}dv_g.$$

Let us denote by $S_i$ the blow-up set of $w_{i,n}$:
$$
S_i:=\left\{ x\in \Sigma\;:\; \exists \{x_n\}\subset \Sigma,\;  w_{i,n}(x_n)\underset{n\to+\infty}\ra +\infty \right\}.
$$

For $w_{i,n}$ we have a concentration-compactness result from \cite{LN,batmal}: 
\begin{teo}
\label{ln}
Up to subsequences, one of the following alternatives holds:
\begin{itemize} 
\item (Compactness) $w_{i,n}$ is bounded in $L^\infty(\Sigma)$ for $i=1,2$.
\item (Blow-up) The blow-up set $S:=S_1\cup S_2$ is non-empty and finite and $\forall\; i\in \{1,2\}$ either $w_{i,n}$ is bounded in $L^\infty_{loc}(\Sigma\bs S)$ or $w_{i,n}\ra -\infty$ locally uniformly in $\Sigma \bs S$.\\
In addition, if $S_i \bs (S_1 \cap S_2)\neq \0$, then $w_{i,n}\ra -\infty$ locally uniformly in $\Sigma \bs S$.
\end{itemize}
Moreover, denoting by $\mu_i$ the weak limit of the sequence of measures  $\widetilde V_{i} e^{w_{i,n}}$, one has
$$
\mu_i = r_i+ \sum_{x\in S_i} \sigma_i(x) \delta_x
$$
with $r_i \in L^1(\Sigma)\cap L^\infty_{loc}(\Sigma\bs S_i)$ and $\sigma_i(x)\ge 2\pi\min\{1,1+\alpha_i(x)\}$ $\forall x \in S_i$, $i=1,2$, where $$\alpha_i(x)=\begin{Si}{ll}
0 & \mbox{ if } x\neq p_j\; j=1,\ldots,l \\
\alpha_{ij} & \mbox{ if } x=p_j.
\end{Si}
$$
\end{teo}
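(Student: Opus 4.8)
The plan is to adapt the Brezis--Merle concentration--compactness machinery for the scalar Liouville equation to the system \eqref{eqw}, exploiting the sign of the off-diagonal coupling. Since $\int_\Sigma\widetilde V_ie^{w_{i,n}}dv_g=\rho_{i,n}\to\ov\rho_i$, the measures $\widetilde V_ie^{w_{i,n}}dv_g$ have uniformly bounded mass; up to a subsequence they converge weakly-$*$ to finite nonnegative measures $\mu_i$. The central step is a singular Brezis--Merle upper bound. Writing the $i$-th equation as $-\Delta w_{i,n}=2\widetilde V_ie^{w_{i,n}}-\widetilde V_je^{w_{j,n}}-2\frac{\rho_{i,n}}{|\Sigma|}+\frac{\rho_{j,n}}{|\Sigma|}$ with $j\neq i$, the coupling term enters with a favorable (negative) sign, so $-\Delta w_{i,n}\le 2\widetilde V_ie^{w_{i,n}}+C$. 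I would apply the scalar Brezis--Merle alternative to this differential inequality, using that the weight $\widetilde V_i\sim d(\cdot,p_j)^{2\alpha_{ij}}$ is a singular potential: the critical local mass for the potential $2\widetilde V_i$ at a point $x$ is $4\pi\min\{1,1+\alpha_i(x)\}$, so that whenever $\mu_i(\{x\})<2\pi\min\{1,1+\alpha_i(x)\}$ the sequence $w_{i,n}$ is bounded above near $x$. This identifies the threshold in the statement and shows that $x\in S_i$ forces $\sigma_i(x)=\mu_i(\{x\})\ge 2\pi\min\{1,1+\alpha_i(x)\}$.

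Next I would set $S_i:=\{x:\mu_i(\{x\})\ge 2\pi\min\{1,1+\alpha_i(x)\}\}$ and prove that it coincides with the blow-up set in the statement. Finiteness is immediate: each atom carries mass bounded below by the threshold, the total mass $\mu_i(\Sigma)\le\ov\rho_i$ is finite, and there are only finitely many singular points $p_j$ where the threshold drops below $2\pi$; hence $S:=S_1\cup S_2$ is finite. Away from $S$ the right-hand sides are bounded in $L^p_{loc}$ for some $p>1$, so by elliptic estimates together with the upper bound just obtained, $w_{i,n}$ is bounded above in $L^\infty_{loc}(\Sigma\setminus S)$. A Harnack-type inequality for functions bounded above then gives the dichotomy: on the connected set $\Sigma\setminus S$, either $w_{i,n}$ stays bounded in $L^\infty_{loc}$ or it diverges to $-\infty$ locally uniformly, with no intermediate behavior. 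When $S=\0$ both $w_{i,n}$ are bounded above globally; since $\int_\Sigma\widetilde V_ie^{w_{i,n}}dv_g=\rho_{i,n}$ is bounded away from $0$, neither can diverge to $-\infty$, so both are bounded in $L^\infty(\Sigma)$, giving the compactness alternative.

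For the structure of $\mu_i$, on $\Sigma\setminus S_i$ the sequence $w_{i,n}$ is bounded above and the equation has $L^p_{loc}$ right-hand side, so $\widetilde V_ie^{w_{i,n}}$ is locally bounded and converges to a density $r_i\in L^1(\Sigma)\cap L^\infty_{loc}(\Sigma\setminus S_i)$, which vanishes identically in the $-\infty$ branch of the dichotomy; the remaining mass concentrates at the finitely many points of $S_i$, producing the atomic part $\sum_{x\in S_i}\sigma_i(x)\delta_x$, and a standard argument excludes any singular-continuous part since off $S_i$ the limit is absolutely continuous. The lower bound $\sigma_i(x)\ge 2\pi\min\{1,1+\alpha_i(x)\}$ is exactly the threshold produced in the first step.

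The delicate, Toda-specific point is the refined alternative: if $S_i\setminus(S_1\cap S_2)\neq\0$, then $w_{i,n}\to-\infty$ locally uniformly in $\Sigma\setminus S$. I would argue as follows. Let $x_0\in S_i\setminus S_j$ with $j\neq i$, and choose a small ball $B$ with $\ov B\cap S=\{x_0\}$. On $B$ the companion component $w_{j,n}$ does not blow up, hence $\widetilde V_je^{w_{j,n}}$ is bounded there, and the $i$-th equation reduces to a scalar Liouville equation $-\Delta w_{i,n}=2\widetilde V_ie^{w_{i,n}}+O(1)$ with an isolated blow-up at $x_0$. The sharp scalar concentration--compactness result then forces $w_{i,n}\to-\infty$ locally uniformly on $B\setminus\{x_0\}$, so $w_{i,n}$ diverges to $-\infty$ on a nonempty open subset of $\Sigma\setminus S$; by the global dichotomy of the previous step, this rules out the bounded branch and yields $w_{i,n}\to-\infty$ on all of $\Sigma\setminus S$. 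The main obstacle is precisely this step: one must ensure that near an unshared blow-up point the coupling term is genuinely subordinate, so that the scalar theory applies (including the impossibility of an isolated bubble sitting on a bounded background), and that the local conclusion propagates across the whole surface through the connectedness of $\Sigma\setminus S$; this is where the results of \cite{LN,batmal} are needed.
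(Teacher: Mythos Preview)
The paper does not prove Theorem~\ref{ln}: it is quoted from \cite{LN,batmal} with no argument supplied, so there is no ``paper's own proof'' to compare against. Your outline is essentially the strategy carried out in those references: exploit the sign of the off-diagonal entries of $A$ to get $-\Delta w_{i,n}\le 2\widetilde V_ie^{w_{i,n}}+C$, apply a singular Brezis--Merle estimate to obtain the threshold $2\pi\min\{1,1+\alpha_i(x)\}$, deduce finiteness of $S$, and then run the Harnack dichotomy on $\Sigma\setminus S$; the refined alternative is obtained by localizing near an unshared blow-up point, where the companion component is bounded above and the equation becomes scalar.

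Two places where a full write-up would need more care than your sketch provides. First, one does not apply Brezis--Merle directly to the inequality; rather one decomposes $w_{i,n}$ as the sum of the Green potential of $2\widetilde V_ie^{w_{i,n}}$ and that of $-\widetilde V_je^{w_{j,n}}$: the second piece has nonpositive Laplacian with uniformly bounded $L^1$ data, hence is bounded above, and the genuine (singular) Brezis--Merle estimate is applied to the first. Second, near $x_0\in S_i\setminus S_j$ you write the coupling term as $O(1)$, but if $x_0=p_k$ with $\alpha_{jk}<0$ one only has $\widetilde V_je^{w_{j,n}}\in L^p_{loc}$ for some $p>1$; this still suffices for the scalar concentration argument forcing $w_{i,n}\to-\infty$ on a punctured neighborhood, but the statement should be adjusted accordingly.
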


Here we want to show that one has $r_i\equiv 0$ for at least one $i\in\{1,2\}$.\\
It may actually occur that only one of the $r_i$'s is zero, as shown in \cite{dpr}.\\
Anyway, to prove Theorems $\ref{comp}$ and $\ref{compsing}$ we only need one between $r_1$ and $r_2$ to be identically zero.

As a first thing, we can show that the profile near blow-up points resembles a combination of Green's functions:

\begin{lemma}\label{green}
$w_{i,n}-\ov{w}_{i,n}\ra \sum_{j=1}^2 \sum_{x \in S_j} a_{ij} \sigma_j(x)G_{x}+s_i$  in $L^\infty_{loc}(\Sigma\backslash S)$ and weakly in $W^{1,q}(\Sigma)$ for any $q\in(1,2)$ with $e^{s_i}\in L^p(\Sigma)$ $\forall p\ge 1$.
\end{lemma}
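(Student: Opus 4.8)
The plan is to decompose $w_{i,n}-\overline{w}_{i,n}$ using the equation $\eqref{eqw}$ and identify the limit via the known weak convergence of the measures $\widetilde V_j e^{w_{j,n}}\rightharpoonup \mu_j$. First I would write, using the Green's representation on $(\Sigma,g)$,
$$w_{i,n}(x)-\overline{w}_{i,n}=\int_\Sigma G_y(x)\left(-\Delta w_{i,n}\right)(y)\,dv_g(y)=\sum_{j=1}^2 a_{ij}\int_\Sigma G_y(x)\left(\widetilde V_j e^{w_{j,n}}-\frac{\rho_{j,n}}{|\Sigma|}\right)dv_g(y),$$
the constant term integrating to zero since $\int_\Sigma G_y\,dv_g=0$. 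Then I would pass to the limit. By Theorem $\ref{ln}$, $\widetilde V_j e^{w_{j,n}}\rightharpoonup\mu_j=r_j+\sum_{x\in S_j}\sigma_j(x)\delta_x$, so the atomic part contributes exactly $\sum_{j}\sum_{x\in S_j}a_{ij}\sigma_j(x)G_x$, while the absolutely continuous part $r_j$ together with $\sum_j a_{ij}\left(r_j-\overline\rho_j/|\Sigma|\right)$-type corrections produces a fixed function; I would call the whole non-singular limit $s_i$, i.e. $s_i$ solves $-\Delta s_i=\sum_j a_{ij}\left(r_j-\overline\rho_j/|\Sigma|\right)$ with $\int_\Sigma s_i\,dv_g$ chosen to match. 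Since $r_j\in L^1(\Sigma)$, standard elliptic estimates give $s_i\in W^{1,q}(\Sigma)$ for all $q\in(1,2)$, and the convergence $w_{i,n}-\overline{w}_{i,n}\to\sum_j\sum_{x\in S_j}a_{ij}\sigma_j(x)G_x+s_i$ holds weakly in $W^{1,q}$; away from the finite set $S$ the right-hand sides of $\eqref{eqw}$ are bounded in $L^p_{loc}$ for $p$ up to the integrability of $r_j$, so elliptic regularity upgrades this to $L^\infty_{loc}(\Sigma\setminus S)$ convergence.

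The remaining point is the integrability $e^{s_i}\in L^p(\Sigma)$ for all $p\ge 1$. Here I would argue as follows: the singular part of the limit, $\sum_j\sum_{x\in S_j}a_{ij}\sigma_j(x)G_x$, behaves like $\sum\frac{a_{ij}\sigma_j(x)}{2\pi}\log\frac{1}{d(\cdot,x)}$ near each blow-up point, so $e^{w_{i,n}-\overline{w}_{i,n}}$ concentrates; but since $\widetilde V_i e^{w_{i,n}}$ has bounded total mass $\rho_{i,n}$ and converges weakly to a measure whose absolutely continuous part is $r_i$, one gets (by, e.g., extracting the atomic part and using Fatou together with the local $L^\infty$ bound on $\Sigma\setminus S$) that $\widetilde V_i e^{s_i}\le r_i$ pointwise a.e., hence $\widetilde V_i e^{s_i}\in L^1(\Sigma)$. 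To bootstrap to all $L^p$, I would invoke the fact that in the blow-up case Theorem $\ref{ln}$ gives that either $w_{i,n}$ is locally bounded or $w_{i,n}\to-\infty$ locally uniformly on $\Sigma\setminus S$: in the latter case $r_i\equiv 0$ and $s_i=-\infty$ formally (the statement is then vacuous or $e^{s_i}=0$), while in the former case the local $L^\infty$ bounds combined with a Brezis–Merle / Moser–Trudinger type estimate applied to the equation for $s_i$ (whose right-hand side is in $L^1$ with small mass away from $S$, or more precisely is an $L^p$ function since $r_j$ is bounded where $w_{j,n}$ is) yield $e^{s_i}\in L^p$ for every $p$. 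The cleanest route is to note $-\Delta s_i=\sum_j a_{ij}(r_j-\overline\rho_j/|\Sigma|)$ with $r_j\in L^1\cap L^\infty_{loc}(\Sigma\setminus S_j)$; splitting $r_j$ near $S_j$ (small $L^1$ mass, controllable by Brezis–Merle) from $r_j$ away from $S_j$ (bounded) gives $e^{s_i}\in L^p_{loc}$ everywhere, and since $s_i$ is genuinely bounded near points not in $S$ this is global.

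The main obstacle I anticipate is making the identification of $s_i$ — and especially the inequality $\widetilde V_i e^{s_i}\le r_i$ — rigorous, since it requires carefully separating the atomic and diffuse parts of the weak limit of $\widetilde V_i e^{w_{i,n}}$ and controlling the oscillation $w_{i,n}-\overline w_{i,n}$ near $S$ well enough to pass to the limit in the exponential. This is precisely where the Brezis–Merle machinery (as used in \cite{LN,batmal,COS}) enters: around each blow-up point one uses that the integral of $\widetilde V_i e^{w_{i,n}}$ on a small ball, minus the atom $\sigma_i(x)$, can be made arbitrarily small, which localizes the problem and allows the standard estimate $\int_B e^{p|w_{i,n}-\overline w_{i,n}|}\le C$ for $p$ below a threshold determined by the local mass — here the threshold is pushed to $+\infty$ exactly because the diffuse mass near $S$ vanishes. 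Once this is in place, the $L^p$ bound for every $p$ follows, and the convergence statements in $W^{1,q}$ and $L^\infty_{loc}(\Sigma\setminus S)$ are routine elliptic estimates.
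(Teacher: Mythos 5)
Your main line is essentially the paper's own: you identify the limit of $w_{i,n}-\ov w_{i,n}$ as $\sum_{j=1}^2\sum_{x\in S_j}a_{ij}\sigma_j(x)G_x+s_i$ with $-\Delta s_i$ an $L^1(\Sigma)$ function built from $r_1,r_2$, and your ``cleanest route'' for $e^{s_i}\in L^p(\Sigma)$ (split the $L^1$ datum into a piece of arbitrarily small mass, handled by Brezis--Merle, plus an $L^\infty$ piece) is precisely what the paper's citation of Remark 2 in \cite{BM} encodes, while your Green-representation argument for the $W^{1,q}$ and $L^\infty_{loc}(\Sigma\setminus S)$ convergence is equivalent to the paper's duality bound $\|\nabla w_{i,n}\|_{L^q(\Sigma)}\le C$ followed by elliptic estimates. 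You should, however, drop the detour in your middle paragraph: the pointwise inequality $\widetilde V_ie^{s_i}\le r_i$ is neither needed nor true in general (when $r_i\equiv0$ but blow-up occurs, $s_i$ is still a finite function, since it is the regular part of the limit of the mean-normalized sequence, so ``$s_i=-\infty$ formally'' is a misconception), and the equation for $s_i$ must carry the constant $\frac1{|\Sigma|}\sum_{x\in S_j}\sigma_j(x)$ coming from $-\Delta G_x=\delta_x-\frac1{|\Sigma|}$, since as you wrote it the right-hand side does not have zero mean and hence admits no solution on the closed surface.
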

\begin{proof}
If $q\in (1,2)$ 
$$
\int_{\Sigma} \nabla w_{i,n}\;\cdot \nabla \ph dv_g\le  \|\Delta w_{i,n}\|_{L^1(\Sigma)}\|\ph\|_\infty \le  C \|\ph\|_{W^{1,q'}(\Sigma)}
$$
$\forall\; \ph \in W^{1,q'}(\Sigma)$ with  $\int_{\Sigma} \ph =0 $, hence one has $\|\nabla w_{i,n}\|_{L^q(\Sigma)}\le C$. In particular $w_{i,n}-\ov{w}_{i,n}$ converges to a function $w_i\in W^{1,q}(\Sigma)$ weakly in $W^{1,q}(\Sigma)$ $\forall q\in (1,2)$ and, thanks to standard elliptic estimates, we get convergence in $L^\infty_{loc}(\Sigma\backslash S)$.\\
The limit functions $w_i$ are distributional solutions of 
$$
-\Delta w_i = \sum_{j=1}^2 a_{ij} \left(r_j+\sum_{x \in S_j} \sigma_j(x)\delta_x -\frac{\ov{\rho}_j}{|\Sigma|}\right).
$$
In particular $s_i:= w_i - \sum_{j=1}^2\sum_{x \in S_j} a_{ij} \sigma_j(x)G_{x}$ solves
$$
-\Delta s_i = \sum_{j=1}^2 a_{ij}\left( r_j +\frac{1}{|\Sigma|}\sum_{x\in S_j} \sigma_j(x)-\frac{\ov\rho_j}{|\Sigma|} \right). 
$$
Since $-\Delta s_i \in L^1(\Sigma)$ we can exploit Remark 2 in \cite{BM} to prove that $e^{s_i}\in L^p(\Sigma)$ $\forall \;p\ge1$. 
\end{proof}


The following Lemma shows the main difference between the case of vanishing and non-vanishing residual.

\begin{lemma}\label{media}$ $
\begin{itemize}
\item $r_i\equiv 0$ $\Longrightarrow$ $\ov{w}_{i,n}\ra-\infty$.
\item $r_i\not\equiv 0$ $\Longrightarrow$ $\ov{w}_{i,n}$ is bounded.
\end{itemize}
\end{lemma}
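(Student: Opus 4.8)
The plan is to connect $\ov w_{i,n}$ to the defining integral constraint $\int_\Sigma \ti V_i e^{w_{i,n}}dv_g=\rho_{i,n}$, which is bounded and bounded away from zero. Write $w_{i,n}=\ov w_{i,n}+(w_{i,n}-\ov w_{i,n})$, so that
$$\rho_{i,n}=e^{\ov w_{i,n}}\int_\Sigma \ti V_i\,e^{w_{i,n}-\ov w_{i,n}}\,dv_g,$$
and hence $\ov w_{i,n}=\log\rho_{i,n}-\log\int_\Sigma \ti V_i\,e^{w_{i,n}-\ov w_{i,n}}\,dv_g$. Since $\rho_{i,n}\to\ov\rho_i\in(0,\infty)$, the behaviour of $\ov w_{i,n}$ is governed entirely by whether $\int_\Sigma \ti V_i\,e^{w_{i,n}-\ov w_{i,n}}\,dv_g$ stays bounded (and bounded below) or diverges to $+\infty$. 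Thus the whole lemma reduces to controlling this integral using the structure of the limit in Lemma \ref{green}.

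First I would treat the case $r_i\not\equiv0$. In the blow-up alternative of Theorem \ref{ln}, if $r_i\not\equiv0$ then $w_{i,n}$ cannot go to $-\infty$ locally uniformly on $\Sigma\setminus S$, so $w_{i,n}$ is bounded in $L^\infty_{loc}(\Sigma\setminus S)$; in the compactness alternative $w_{i,n}$ is globally bounded and there is nothing to prove. From Lemma \ref{green}, $w_{i,n}-\ov w_{i,n}\to \sum_j\sum_{x\in S_j}a_{ij}\sigma_j(x)G_x+s_i$ in $L^\infty_{loc}(\Sigma\setminus S)$ and weakly in $W^{1,q}$, with $e^{s_i}\in L^p$ for all $p$. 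On the region away from $S$ the weight $\ti V_i\,e^{w_{i,n}-\ov w_{i,n}}$ converges in $L^1$ to a positive limit (the Green's functions are bounded below away from their poles, and $\ti V_i$ is locally bounded below there), giving a strictly positive lower bound on $\int_\Sigma \ti V_i\,e^{w_{i,n}-\ov w_{i,n}}dv_g$; for the upper bound, near each $x\in S$ one combines the logarithmic singularity $a_{ij}\sigma_j(x)G_x\sim \frac{a_{ij}\sigma_j(x)}{2\pi}\log d(\cdot,x)$ with the power weight $\ti V_i\sim d(\cdot,p_j)^{2\alpha_{ij}}$ and the fact $e^{s_i}\in L^p$, so that $\ti V_i\,e^{w_{i,n}-\ov w_{i,n}}$ is equi-integrable near $S$ (for this one needs that the combined exponent beats $-2$, i.e.\ $\sigma_j(x)$ is not too large — but in the bounded-residual regime the total mass $\rho_{i,n}$ is bounded, which forces $\sum_{x\in S_i}\sigma_i(x)\le\ov\rho_i<\infty$, and a Fatou/dominated-convergence argument using the already-known $L^1$ bound on $\ti V_ie^{w_{i,n}}$ closes the estimate). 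Hence $\int_\Sigma \ti V_i e^{w_{i,n}-\ov w_{i,n}}dv_g$ is bounded above and below by positive constants, and $\ov w_{i,n}$ is bounded.

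Next the case $r_i\equiv0$. Then $\mu_i=\sum_{x\in S_i}\sigma_i(x)\delta_x$ is purely atomic, so $\ti V_i e^{w_{i,n}}\rightharpoonup \mu_i$ with no absolutely continuous part; in particular $\int_{\Sigma\setminus\bigcup_{x\in S_i}B_r(x)}\ti V_i e^{w_{i,n}}dv_g\to0$ for every $r>0$. I claim this forces $w_{i,n}\to-\infty$ locally uniformly on $\Sigma\setminus S$: indeed, by the second bullet of Theorem \ref{ln}, $w_{i,n}$ is either locally bounded or tends to $-\infty$ there, and local boundedness would give (via the local $L^1$ convergence of $\ti V_i e^{w_{i,n}}$ to a nontrivial absolutely continuous limit, using $\ti V_i>0$ a.e.) a nonzero $r_i$, a contradiction. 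So $w_{i,n}\to-\infty$ on $\Sigma\setminus S$, whence $w_{i,n}-\ov w_{i,n}\to-\infty$ there as well unless $\ov w_{i,n}\to-\infty$ too — and Lemma \ref{green} says $w_{i,n}-\ov w_{i,n}$ converges to a \emph{finite} limit in $L^\infty_{loc}(\Sigma\setminus S)$. The only way to reconcile $w_{i,n}\to-\infty$ with $w_{i,n}-\ov w_{i,n}\to$ finite is $\ov w_{i,n}\to-\infty$.

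The main obstacle I expect is the upper bound on $\int_\Sigma \ti V_i e^{w_{i,n}-\ov w_{i,n}}dv_g$ in the case $r_i\not\equiv0$: one must rule out that mass escapes to infinity through the blow-up points faster than the $W^{1,q}$-weak / $L^\infty_{loc}$ convergence controls. The clean way around this is to avoid estimating the integral directly and instead argue by contradiction — if $\ov w_{i,n}\to+\infty$ along a subsequence, then $w_{i,n}\to+\infty$ uniformly on compact subsets of $\Sigma\setminus S$ (by Lemma \ref{green}), which makes $\int_{\Sigma\setminus S}\ti V_i e^{w_{i,n}}dv_g\to+\infty$, contradicting $\int_\Sigma \ti V_i e^{w_{i,n}}dv_g=\rho_{i,n}\to\ov\rho_i$; and if $\ov w_{i,n}\to-\infty$ along a subsequence then $w_{i,n}\to-\infty$ on $\Sigma\setminus S$, so all the mass concentrates at $S$ and $r_i\equiv0$, again a contradiction. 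This contradiction argument sidesteps the delicate equi-integrability near $S$ entirely and is the route I would actually write up.
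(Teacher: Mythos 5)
Your final write-up route is correct and is essentially the paper's own argument: both hinge on the identity $\int_\Omega \widetilde V_i e^{w_{i,n}}dv_g = e^{\ov w_{i,n}}\int_\Omega \widetilde V_i e^{w_{i,n}-\ov w_{i,n}}dv_g$ on sets $\Omega\Subset\Sigma\setminus S$, where Lemma \ref{green} gives a finite positive limit for the second factor and weak convergence to $\mu_i$ identifies the first with $\int_\Omega r_i\,dv_g$; the paper just states this directly (using Jensen's inequality for the upper bound on $\ov w_{i,n}$) instead of arguing by contradiction. You were also right to abandon the global-integral variant, since when $r_i\equiv 0$ the quantity $\int_\Sigma \widetilde V_i e^{w_{i,n}-\ov w_{i,n}}dv_g$ genuinely diverges, so localizing away from $S$ is the correct fix.
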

\begin{proof}
First of all, $\ov w_{i,n}$ is bounded from above due to Jensen's inequality.\\
Now, take any non-empty open set $\Omega\Subset \Sigma\bs S$. 
$$
\int_{\Omega}\widetilde V_{i} e^{w_{i,n}}dv_g = e^{\ov{w}_{i,n}} \int_{\Omega}\widetilde V_{i} e^{w_{i,n}-\ov{w}_{i,n}}dv_g
$$
and by Lemma \ref{green}
$$
 \int_{\Omega} \widetilde V_{i} e^{w_{i,n}-\ov{w}_{i,n}} dv_g\underset{n\to+\infty}\ra\int_{\Omega} \widetilde V_{i} e^{\sum_{j=1}^2 \sum_{x \in S_j} a_{ij} \sigma_j(x)G_{x}+s_i}dv_g \in(0,+\infty).
$$
On the other hand,
$$
\int_{\Omega} \widetilde V_{i} e^{w_{i,n}} dv_g\underset{n\to+\infty}\ra \mu_i(\Omega) = \int_{\Omega} r_i(x) dv_g(x).
$$
If $r_i\equiv 0$ one has $\ov{w}_{i,n}\ra -\infty$. If instead $r_i\not\equiv0$, choosing $\Omega$ such that $\int_{\Omega} r_i(x) dv_g>0$ we must have $\ov{w}_{i,n}$ necessarily bounded.
\end{proof}

\begin{oss}
From the previous two lemmas, we can write $r_i=\widehat V_ie^{s_i}$, where
$$\widehat V_i:=\widetilde V_ie^{\lim_{n\to+\infty}\ov w_{i,n}}e^{\sum_{j=1}^2 \sum_{x \in S_j} a_{ij} \sigma_j(x)G_{x}}$$
satisfies $\widehat V_i\sim d(\cdot,x)^{2\alpha_i(x)-\frac{\sum_{j=1}^2a_{ij}\sigma_j(x)}{2\pi}}$ around each $x\in S_i$, provided $r_i\not\equiv0$.
\end{oss}

The key Lemma is an extension of Chae-Ohtsuka-Suzuki \cite{COS} to the singular case. Basically, it gives necessary conditions on the $\sigma_i$'s to have non-vanishing residual.

\begin{lemma}
\label{residuo}
For both $i=1,2$ we have $s_i\in W^{2,p}(\Sigma)$ for some $p>1$. Moreover, if $\sum_{j=1}^2a_{ij}\sigma_j(x_0)\ge4\pi(1+\alpha_i(x_0))$ for some $x_0\in S_i$, then $r_i\equiv 0 $.
\end{lemma}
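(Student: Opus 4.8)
The plan is to prove the implication first, and to deduce the regularity statement $s_i\in W^{2,p}(\Sigma)$ from it. Throughout I would use the facts already at hand: $s_i$ solves $-\Delta s_i=\sum_j a_{ij}(r_j+K_j)$ with $K_j$ constant (Lemma~\ref{green}), $r_j\in L^1(\Sigma)\cap L^\infty_{loc}(\Sigma\setminus S_j)$, $e^{s_j}\in L^p(\Sigma)$ for all $p$, and, by the Remark, $\widehat V_j\sim d(\cdot,x)^{\beta_j(x)}$ near each $x\in S_j$ whenever $r_j\not\equiv0$, where $\beta_j(x):=2\alpha_j(x)-\frac{1}{2\pi}\sum_k a_{jk}\sigma_k(x)$. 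The key observation is that the hypothesis $\sum_j a_{ij}\sigma_j(x_0)\ge4\pi(1+\alpha_i(x_0))$ is exactly $\beta_i(x_0)\le-2$, i.e. non-integrability of $d(\cdot,x_0)^{\beta_i(x_0)}$ near $x_0$ in dimension two.

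For the implication I would argue by contradiction: assume $r_i\not\equiv0$ and $\beta_i(x_0)\le-2$ for some $x_0\in S_i$, and fix $\delta>0$ with $\overline{B_\delta(x_0)}\cap S=\{x_0\}$. On $B_\delta(x_0)$ write the Green representation $s_i=2P_i-P_{3-i}+h_i$, where $P_j\ge0$ is the Green potential of $r_j$ and $h_i$ is bounded (it solves $-\Delta h_i=\mathrm{const}$ with boundary datum $s_i|_{\partial B_\delta(x_0)}$, bounded since $\partial B_\delta(x_0)$ is disjoint from $S$). Adding the analogous identity for $s_{3-i}$ cancels the negative potentials and gives $s_i+s_{3-i}=P_i+P_{3-i}+(h_i+h_{3-i})\ge-C$ near $x_0$. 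Then I split into two cases. If $P_{3-i}$ is bounded near $x_0$, then $s_{3-i}=2P_{3-i}-P_i+h_{3-i}\le C'$, hence $s_i\ge-C-C'$ and $r_i=\widehat V_ie^{s_i}\ge c\,d(\cdot,x_0)^{\beta_i(x_0)}$ near $x_0$, which contradicts $r_i\in L^1(\Sigma)$. If $P_{3-i}$ is unbounded near $x_0$, then $r_{3-i}$ cannot belong to $L^p_{loc}$ near $x_0$ for any $p>1$ (otherwise its potential would be continuous there), and this forces $r_{3-i}\not\equiv0$, $x_0\in S_{3-i}$ and $\beta_{3-i}(x_0)\le-2$; but then $r_ir_{3-i}=\widehat V_i\widehat V_{3-i}e^{s_i+s_{3-i}}\ge c\,d(\cdot,x_0)^{\beta_i(x_0)+\beta_{3-i}(x_0)}$ near $x_0$, with exponent $\le-4$, so $\sqrt{r_ir_{3-i}}\notin L^1$ near $x_0$, whereas $\sqrt{r_ir_{3-i}}\le\tfrac12(r_i+r_{3-i})\in L^1(\Sigma)$. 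Either way we reach a contradiction, so $r_i\equiv0$.

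Granting the implication for both indices, the regularity part is short: for each $j$, either $r_j\equiv0$, or the contrapositive gives $\beta_j(x)>-2$ at every $x\in S_j$, and then $\widehat V_j\sim d(\cdot,x)^{\beta_j(x)}$ with $\beta_j(x)>-2$, together with $e^{s_j}\in L^q(\Sigma)$ for all $q$, the finiteness of $S_j$ and $r_j\in L^\infty_{loc}(\Sigma\setminus S_j)$, yields $r_j\in L^{p_j}(\Sigma)$ for some $p_j>1$ by Hölder. Hence $-\Delta s_i=\sum_j a_{ij}r_j+\mathrm{const}\in L^p(\Sigma)$ with $p=\min\{p_1,p_2\}>1$, and elliptic regularity gives $s_i\in W^{2,p}(\Sigma)$. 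This step uses the implication for both $j$, but the proof of the implication for a given $i$ only invokes the structure of $r_{3-i}$ coming from the Remark, so there is no circularity.

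The delicate point is the second case above. The obvious moves — bounding $s_i$ itself from below near $x_0$, or estimating $\int d(\cdot,x_0)^{\beta_i(x_0)}e^{s_i}$ by Hölder — fail exactly when the companion residual $r_{3-i}$ is itself strongly concentrated at $x_0$, because no (at best exponentially small) lower bound for $e^{s_i}$ can overcome the non-integrable weight $d(\cdot,x_0)^{\beta_i(x_0)}$. What saves the argument is the coupling of the system: only the combination $s_i+s_{3-i}$ carries a source term of a definite sign, which produces the pointwise bound $s_i+s_{3-i}\ge-C$, after which the arithmetic–geometric inequality turns the two $L^1$ bounds into the required contradiction. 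The remaining work — justifying the Green representation for the merely distributional, $L^1$-data equation, the boundedness of $h_i$, and the continuity of $P_{3-i}$ once $r_{3-i}\in L^p_{loc}$, $p>1$ — is routine.
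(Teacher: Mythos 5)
Your argument is correct, and its engine is the same as the paper's: a linear combination of $s_1,s_2$ whose source is a nonnegative residual, bounded below via a Green representation, followed by a convexity inequality that plays the $L^1$ bounds on $r_1,r_2$ against the product of non-integrable weights. The bookkeeping, however, is genuinely different. The paper splits according to which residuals vanish: if $r_2\equiv0$ it bounds $s_1$ from below directly; if both are nonzero it uses the combination $\tfrac{2s_1+s_2}{3}$, whose source is $r_1$ alone, together with the weighted convexity bound $\min\{\widehat V_1,\widehat V_2\}e^{(2s_1+s_2)/3}\le\tfrac23\widehat V_1e^{s_1}+\tfrac13\widehat V_2e^{s_2}$ to get, at every blow-up point, one index with subcritical exponent, and then bootstraps a lower bound for the other $s_i$ near $x_0$ (using the newly gained $r_1\in L^p(B_r(x_0))$ in the Green representation) to obtain the second index and, simultaneously, the local $L^p$ bounds feeding $s_i\in W^{2,p}(\Sigma)$. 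You instead argue by contradiction at the fixed point $x_0$, with a dichotomy on whether the companion potential $P_{3-i}$ is bounded there, use the unweighted sum $s_1+s_2$ (source $r_1+r_2$) plus AM--GM, $\sqrt{r_1r_2}\le\tfrac12(r_1+r_2)$, and recover the regularity afterwards from the contrapositive plus H\"older; this is more symmetric and avoids the global case analysis, while the paper's route hands you the local $L^p$ bounds on both residuals directly and, by working with the Green function of $\Sigma$ (which satisfies $G_x\ge-C$ and pairs with the mean-zero $s_i$), incurs none of the technical debts you flag (local representation with $L^1$ data, boundary behaviour, boundedness of $h_i$). Those debts are payable, but note you can cancel them outright: since $s_i$ has zero mean, $s_i(x)=\int_\Sigma G_x(y)\sum_j a_{ij}r_j(y)\,dv_g(y)$, so you may take $P_j(x)=\int_\Sigma\left(G_x(y)+C\right)r_j(y)\,dv_g(y)\ge0$ as global potentials and run the same dichotomy with no boundary terms. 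Finally, in Case B make explicit the one-line H\"older step behind ``this forces $\beta_{3-i}(x_0)\le-2$'' (if $\beta_{3-i}(x_0)>-2$, then $\widehat V_{3-i}e^{s_{3-i}}\in L^p$ near $x_0$ for some $p>1$, so $P_{3-i}$ would be bounded there), since that is the hinge of the whole case.
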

\begin{proof}
If both $r_1$ and $r_2$ are identically zero, then also $s_1$ and $s_2$ are both identically zero, so there is nothing to prove.\\
Suppose now $r_1\not\equiv0$ and $r_2\equiv0$. In this case,
$$\left\{\begin{array}{l}-\Delta s_1=2\left( r_1 +\frac{1}{|\Sigma|}\sum_{x_0\in S_1} \sigma_1(x_0)-\frac{\ov\rho_1}{|\Sigma|} \right) \\-\Delta s_2=-\left( r_1 +\frac{1}{|\Sigma|}\sum_{x_0\in S_1} \sigma_1(x_0)-\frac{\ov\rho_1}{|\Sigma|} \right) \end{array}\right..$$
Then, being $G_x(y)\ge-C$ for all $x,y\in\Sigma$ with $x\ne y$, we get
$$s_1(x)=\int_\Sigma G_x(y)2r_1(y)dv_g(y)\ge-2C\int_\Sigma r_1dv_g\ge-C'.$$
Therefore, from the previous remark, around each $x_0\in S_1$ we get
$$r_1(y)\ge Cd(x_0,y)^{2\alpha_1(x_0)-\frac{\sum_{j=1}^2a_{1j}\sigma_j(x_0)}{2\pi}},$$
so being $r_1\in L^1(\Sigma)$, it must be $\sum_{j=1}^2a_{1j}\sigma_j(x_0)<4\pi(1+\alpha_1(x_0))$.\\
Moreover, being $e^{qs_1}\in L^1(\Sigma)$ for any $q\ge1$, from Holder's inequality we get $r_1\in L^p(\Sigma)$ for some $p>1$; therefore, standard estimates yield $s_i\in W^{2,p}(\Sigma)$ for both $i=1,2$.\\
Consider now the case of both non-vanishing residuals, which means by Theorem $\ref{ln}$ $S_1=S_2=S$. In this case,
$$-\Delta\left(\frac{2s_1+s_2}3\right)=\left( r_1 +\frac{1}{|\Sigma|}\sum_{x_0\in S_1} \sigma_1(x_0)-\frac{\ov\rho_1}{|\Sigma|} \right)$$
hence, arguing as before, $\frac{2s_1+s_2}3\ge-C$. Therefore, using the convexity of $t\to e^t$ we get
$$C\int_\Sigma\min\left\{\widehat V_1,\widehat V_2\right\}dv_g\le \int_\Sigma\min\left\{\widehat V_1,\widehat V_2\right\}e^{\frac{2s_1+s_2}3}dv_g\le$$
$$\le\frac{2}3\int_\Sigma\widehat V_1e^{s_1}dv_g+\frac{1}3\int_\Sigma\widehat V_2e^{s_2}dv_g=\frac{2}3\int_\Sigma r_1dv_g+\frac{1}3\int_\Sigma r_2dv_g<+\infty.$$
\\
Therefore, for any $x_0\in S$ there exists $i\in\{1,2\}$ such that $\sum_{j=1}^2a_{ij}\sigma_j(x_0)<4\pi(1+\alpha_i(x_0))$. Fix $x_0$ and suppose, without loss of generality, that this is true for $i=1$. This implies that $r_1\in L^p(B_r(x_0))$ for small $r$, so for $x\in B_{\frac{r}2}(x_0)$ we have
\begin{eqnarray*}
s_2(x)&=&\int_\Sigma G_x(y)2r_2(y)dv_g(y)-\int_{B_r(x_0)} G_x(y)r_1(y)dv_g(y)\\
&-&\int_{\Sigma\backslash B_r(x_0)}G_x(y)r_1(y)dv_g(y)\\
&\ge&-C-\sup_{z\in\Sigma}\|G_z\|_{L^{p'}(\Sigma)}\|r_1\|_{L^p(B_r(x_0))}\\
&-&\sup_{z\in B_\frac{r}2(x_0)}\|G_z\|_{L^\infty(\Sigma\backslash B_r(x_0))}\|r_1\|_{L^1(\Sigma)}\\
&\ge&-C'.
\end{eqnarray*}
Therefore, arguing as before, we must have $\sum_{j=1}^2a_{2j}\sigma_j(x_0)<4\pi(1+\alpha_2(x_0))$ and $r_2\in L^p\left(B_{\frac{r}2}(x_0)\right)$. This implies $-\Delta s_i\in L^p\left(B_{\frac{r}2}(x_0)\right)$ for both $i$'s. Hence, being $x_0$ arbitrary and $-\Delta s_i\in L^p_{loc}(\Sigma\backslash S)$, by elliptic estimates the proof is complete.
\end{proof}

From Lemmas $\ref{green}$ and $\ref{residuo}$ we can deduce, through a Pohozaev identity, the following information about the local blow-up values. This was explicitly done in \cite{jw,lwz}.

\begin{lemma}\label{ellisse}
If $x_0\in S$ then 
$$
\sigma_1^2(x_0) +\sigma_2^2(x_0)-\sigma_1(x_0)\sigma_2(x_0)= 4\pi (1+\alpha_1(x_0))\sigma_1(x_0)+  4\pi (1+\alpha_2(x_0))\sigma_2(x_0).
$$ 
\end{lemma}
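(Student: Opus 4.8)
The plan is to derive the claimed quadratic relation from a Pohozaev-type identity applied on a small ball $B_r(x_0)$ around a blow-up point $x_0\in S$. First I would work with the functions $w_{i,n}$ solving \eqref{eqw}, or equivalently with $v_{i,n}:=w_{i,n}-\ov w_{i,n}$, and observe that by Lemma \ref{green} these converge in $L^\infty_{loc}(\Sigma\setminus S)$ and weakly in $W^{1,q}(\Sigma)$ to $\sum_{j}\sum_{x\in S_j}a_{ij}\sigma_j(x)G_x+s_i$, with $s_i\in W^{2,p}(\Sigma)$ by Lemma \ref{residuo}. Near $x_0$ I would write, in a conformal coordinate chart, $-\Delta w_{i,n}=\widetilde V_ie^{w_{i,n}}+(\text{l.o.t.})$, absorb the weight $\widetilde V_i\sim|x-x_0|^{2\alpha_i(x_0)}$ into the equation, and recall that the total mass of $\widetilde V_ie^{w_{i,n}}$ on $B_r(x_0)$ tends to $\sigma_i(x_0)+o_r(1)$.

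The key step is the Pohozaev identity: multiply the $i$-th equation by $x\cdot\nabla w_{i,n}$ (or rather the combination dictated by the Cartan matrix — one works with the quadratic form $\sum_{ij}a^{ij}\nabla w_{i,n}\cdot\nabla w_{j,n}$ where $a^{ij}$ are the entries of $A^{-1}=\frac13\begin{pmatrix}2&1\\1&2\end{pmatrix}$), integrate over $B_r(x_0)$, and integrate by parts. The bulk terms produce, on the one hand, the "energy" quantity $\frac{r}{2\pi}\sum_{ij}a^{ij}\sigma_i(x_0)\sigma_j(x_0)$-type contribution coming from the singular Green's function parts (whose gradients behave like $-\frac{\sigma}{2\pi}\frac{x-x_0}{|x-x_0|^2}$), and on the other hand, the terms $\int_{B_r(x_0)} x\cdot\nabla\big(|x|^{2\alpha_i}\big)e^{w_{i,n}}$ which, after integration by parts, yield $2(1+\alpha_i(x_0))$ times the mass $\sigma_i(x_0)$. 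The boundary terms on $\partial B_r(x_0)$ are controlled because on $\Sigma\setminus S$ the $w_{i,n}$ converge smoothly, so the boundary integrals depend only on the limiting Green's-function profile plus the regular part $s_i$; taking $n\to\infty$ and then $r\to0$, the $s_i$-contributions and all lower-order and remainder terms vanish (here one uses $s_i\in W^{2,p}$, hence $\nabla s_i\in L^{p^*}$ with good trace estimates, so that $r\|\nabla s_i\|^2_{L^2(\partial B_r)}\to0$ along a suitable sequence of radii).

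Matching the two computations gives an identity of the form $\sum_{i,j=1}^2 a^{ij}\sigma_i(x_0)\sigma_j(x_0)=\sum_{i=1}^2 2(1+\alpha_i(x_0))\,c\cdot\sigma_i(x_0)$ for the appropriate constant, and unwinding $A^{-1}$ and the normalization $4\pi$ (which is built into the Green's function singularity $G_x(y)\sim-\frac{1}{2\pi}\log|x-y|$, so $a_{ij}\sigma_j G_x$ carries the factor) turns the left side into $\frac13\big(2\sigma_1^2+2\sigma_2^2+2\sigma_1\sigma_2\big)$ — wait, more carefully one gets precisely $\sigma_1^2+\sigma_2^2-\sigma_1\sigma_2$ after clearing denominators, since the natural Pohozaev quadratic form for the Toda system is $\sum_{ij}a_{ij}^{-1}\sigma_i\sigma_j$ scaled so that it equals $\frac{1}{3}((2\sigma_1+\sigma_2)\sigma_1+(\sigma_1+2\sigma_2)\sigma_2)$, and the cross-multiplied version is the stated one. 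The right side becomes $4\pi(1+\alpha_1(x_0))\sigma_1(x_0)+4\pi(1+\alpha_2(x_0))\sigma_2(x_0)$.

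The main obstacle I anticipate is making the Pohozaev argument rigorous \emph{at finite $n$} rather than in the limit: one must perform the integration by parts on $B_r(x_0)$ for $w_{i,n}$, keep careful track of the weight $\widetilde V_i$ which is only Hölder (not smooth) at $x_0$ when $\alpha_{ij}\ne0$, and show that the error terms — the "$-\frac{\ov\rho_j}{|\Sigma|}$" bulk term, the difference between $\widetilde V_i$ and its leading singular profile, and the interior contribution of $r_i$ where it is nonzero — are $o_r(1)$ uniformly in $n$, or vanish after passing to the limit in $n$ first. This is exactly the computation carried out in \cite{jw,lwz}, so I would cite those references for the technical details of the weighted Pohozaev identity and only present the structure above; the novelty here is merely that Lemmas \ref{green} and \ref{residuo} furnish the regularity ($s_i\in W^{2,p}$, hence controllable boundary terms) needed to legitimize taking the limit, and that the singular exponents $\alpha_i(x_0)$ enter the right-hand side linearly through the dilation of the weight.
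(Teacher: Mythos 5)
The paper gives no proof of this lemma at all: it merely observes that the identity follows from a Pohozaev identity applied to the limit profile of Lemmas \ref{green} and \ref{residuo}, and cites \cite{jw,lwz} for the explicit computation — which is precisely the structure and the deferral you propose, so your approach is essentially the paper's. One small remark to settle the hesitation in your algebra: the Green's-function profile of $w_i$ near $x_0$ carries the masses $m_i=\sum_j a_{ij}\sigma_j(x_0)$, and the Pohozaev quadratic form then gives $\sum_{i,j}a^{ij}m_im_j=\sum_{i,j}a_{ij}\sigma_i(x_0)\sigma_j(x_0)=2\left(\sigma_1^2(x_0)-\sigma_1(x_0)\sigma_2(x_0)+\sigma_2^2(x_0)\right)$, which is exactly the stated left-hand side up to the factor matching $2\cdot 4\pi(1+\alpha_i(x_0))\sigma_i(x_0)$ on the right.
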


\begin{lemma}
If $x_0\in S_1\cap S_2$ then there exists $i$ such that $\sum_{j=1}^2a_{ij}\sigma_j(x_0)\ge 4\pi (1+\alpha_i(x_0))$.
\end{lemma}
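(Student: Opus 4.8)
The plan is a short algebraic argument combining the Pohozaev-type relation of Lemma~\ref{ellisse} with the strict positivity of the local masses. Write $\sigma_i:=\sigma_i(x_0)$ and $\gamma_i:=4\pi(1+\alpha_i(x_0))$ for brevity. Since $x_0\in S_1\cap S_2$, the lower bound in Theorem~\ref{ln} gives $\sigma_i\ge2\pi\min\{1,1+\alpha_i(x_0)\}>0$ for $i=1,2$ (recall $\alpha_{ij}>-1$). Spelling out the Cartan matrix, the claim to prove is that $2\sigma_1-\sigma_2\ge\gamma_1$ or $-\sigma_1+2\sigma_2\ge\gamma_2$.

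I would argue by contradiction. Suppose instead that $2\sigma_1-\sigma_2<\gamma_1$ and $-\sigma_1+2\sigma_2<\gamma_2$. Multiplying the first inequality by $\sigma_1>0$ and the second by $\sigma_2>0$ and adding, one gets
$$2\sigma_1^2+2\sigma_2^2-2\sigma_1\sigma_2<\gamma_1\sigma_1+\gamma_2\sigma_2.$$
By Lemma~\ref{ellisse} the right-hand side equals $\sigma_1^2+\sigma_2^2-\sigma_1\sigma_2$, so the inequality reduces to $\sigma_1^2+\sigma_2^2-\sigma_1\sigma_2<0$. This is impossible, since $\sigma_1^2+\sigma_2^2-\sigma_1\sigma_2=\bigl(\sigma_1-\tfrac12\sigma_2\bigr)^2+\tfrac34\sigma_2^2>0$ because $\sigma_2>0$ (equivalently, the quadratic form associated with the Cartan matrix is positive definite). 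This contradiction proves the lemma.

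There is essentially no analytic obstacle at this stage: all the real work — the concentration–compactness structure of Theorem~\ref{ln}, and above all the ellipse identity of Lemma~\ref{ellisse}, itself extracted from a Pohozaev identity together with Lemmas~\ref{green} and~\ref{residuo} — has already been carried out. The only point requiring a little care is to have in hand the strict positivity of \emph{both} $\sigma_1(x_0)$ and $\sigma_2(x_0)$: this is precisely what membership in the common blow-up set $S_1\cap S_2$ provides through the estimate $\sigma_i(x_0)\ge2\pi\min\{1,1+\alpha_i(x_0)\}$ of Theorem~\ref{ln}, and without it the multiplication step would not preserve the inequalities. Combined with Lemma~\ref{residuo}, this lemma yields that whenever the two blow-up sets meet, at least one residual $r_i$ vanishes identically, which is the information needed for Theorems~\ref{comp} and~\ref{compsing}.
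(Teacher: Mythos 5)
Your proof is correct and follows essentially the same route as the paper: assume both inequalities fail, multiply them by the (positive) local masses, sum, and compare with the Pohozaev identity of Lemma~\ref{ellisse}. The only cosmetic difference is that the paper contradicts the positivity of $4\pi(1+\alpha_1(x_0))\sigma_1(x_0)+4\pi(1+\alpha_2(x_0))\sigma_2(x_0)$, while you contradict the positive definiteness of the Cartan quadratic form; these are equivalent via the same identity.
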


\begin{proof}
Suppose the statement is not true. Then, by Lemmas $\ref{residuo}$ and $\ref{ellisse}$, we would have
\begin{equation}\label{conditions}
\left\{\begin{array}{l}2\sigma_1(x_0)-\sigma_2(x_0)<4\pi(1+\alpha_1(x_0))\\2\sigma_2(x_0)-\sigma_1(x_0)<4\pi(1+\alpha_2(x_0))\\\sigma_1^2(x_0) +\sigma_2^2(x_0)-\sigma_1(x_0)\sigma_2(x_0)=\\= 4\pi (1+\alpha_1(x_0))\sigma_1(x_0)+  4\pi (1+\alpha_2(x_0))\sigma_2(x_0)\end{array}\right.,
\end{equation}
which has no solution between positive $\sigma_1(x_0),\sigma_2(x_0)$.\\
In fact, by multiplying the first equation by $\dis{\frac{\sigma_1(x_0)}2}$ and the second by $\dis{\frac{\sigma_2(x_0)}2}$ and summing, we get
$$\sigma_1^2(x_0) +\sigma_2^2(x_0)-\sigma_1(x_0)\sigma_2(x_0)< 2\pi (1+\alpha_1(x_0))\sigma_1(x_0)+  2\pi (1+\alpha_2(x_0))\sigma_2(x_0),$$
which contradicts the third equation.\\
The scenario is described by the picture.

\begin{figure}[h!]
\center
\includegraphics{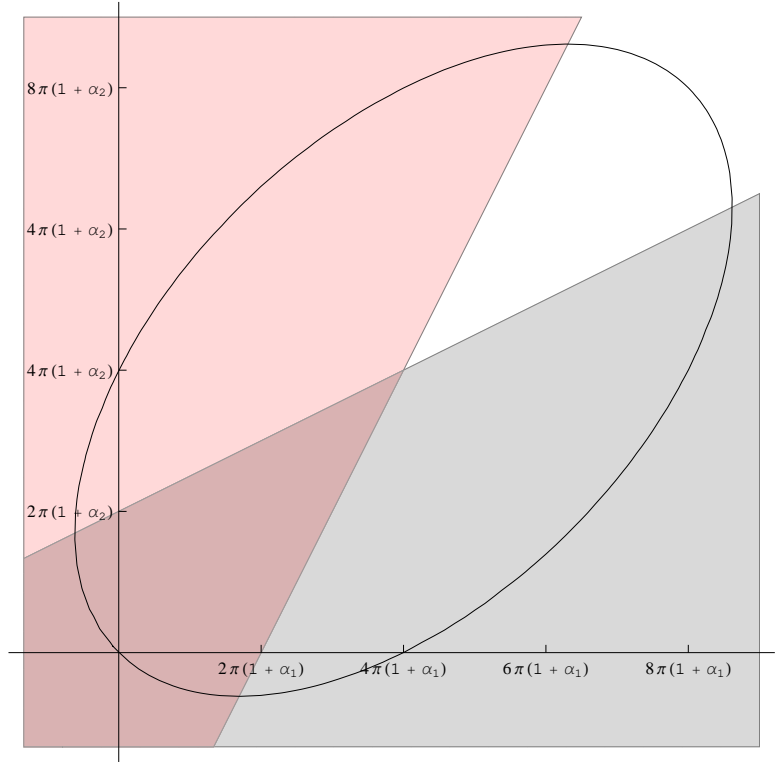}
\caption{The algebraic conditions $\eqref{conditions}$ satisfied by $\sigma_1(x_0),\sigma_2(x_0)$}
\end{figure}

\end{proof}
\begin{cor}
\label{rhoi}
Let $w_n$ be a sequence of solutions of \eqref{eqw}. If $S\neq \0$ then either $r_1\equiv 0$ or $r_2\equiv 0$. In particular there exists $i\in\{1,2\}$ such that $\ov{\rho}_i = \sum_{x\in S_i} \sigma_i(x)$.
\end{cor}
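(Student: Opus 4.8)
The plan is to combine the two preceding lemmas to rule out non-vanishing residuals for both components simultaneously whenever the blow-up set is non-empty. Concretely, suppose for contradiction that $S\neq\emptyset$ and both $r_1\not\equiv0$ and $r_2\not\equiv0$. By Theorem \ref{ln}, since neither $w_{i,n}\to-\infty$ locally uniformly, we must have $S_1=S_2=S$, so every blow-up point $x_0\in S$ lies in $S_1\cap S_2$. The Lemma immediately preceding this corollary then produces, for each such $x_0$, an index $i$ with $\sum_{j=1}^2 a_{ij}\sigma_j(x_0)\ge 4\pi(1+\alpha_i(x_0))$. Applying Lemma \ref{residuo} with this $x_0$ forces $r_i\equiv0$, contradicting our assumption. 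Hence at most one of $r_1,r_2$ can be non-zero, which is the first claim.

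For the second claim I would integrate the measure identity from Theorem \ref{ln}. Pick $i\in\{1,2\}$ with $r_i\equiv0$; then $\mu_i=\sum_{x\in S_i}\sigma_i(x)\delta_x$, and since $\widetilde V_i e^{w_{i,n}}\rightharpoonup\mu_i$ weakly as measures and $\int_\Sigma\widetilde V_i e^{w_{i,n}}dv_g=\rho_{i,n}\to\ov\rho_i$ by the normalization in \eqref{eqw}, testing against the constant function $1$ gives $\ov\rho_i=\mu_i(\Sigma)=\sum_{x\in S_i}\sigma_i(x)$. (One should note $\mu_i(\Sigma)=\lim_n\rho_{i,n}$ because $1$ is continuous on the compact surface $\Sigma$ and there is no loss of mass at infinity.)

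I would also make explicit the small case analysis hidden in the first step: a priori one of the components could be the ``compact'' alternative in Theorem \ref{ln} (bounded in $L^\infty_{loc}(\Sigma\bs S)$) rather than diverging to $-\infty$. If, say, $w_{1,n}$ is bounded in $L^\infty_{loc}$ but blows up somewhere, then by the last sentence of the blow-up alternative $S_1\bs(S_1\cap S_2)=\emptyset$, i.e. $S_1\subset S_2$; and by Lemma \ref{media} boundedness of $\ov w_{1,n}$ is equivalent to $r_1\not\equiv0$. So the only configuration in which both residuals are non-zero is exactly $S_1=S_2=S\neq\emptyset$, which is the configuration I contradict above. This bookkeeping is the only slightly delicate point; the rest is a direct chaining of the quoted lemmas.

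The main obstacle is not really an obstacle but a matter of being careful about which alternative of Theorem \ref{ln} is in force for each index and about the identification $S_i\subset S$ versus $S_i=S$; once that is pinned down, the ``ellipse'' lemma plus Lemma \ref{residuo} close the argument mechanically, and the mass identity $\ov\rho_i=\sum_{x\in S_i}\sigma_i(x)$ follows by testing the weak-$*$ convergence against $1$.
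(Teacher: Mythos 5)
Your argument is correct and is exactly the chain the paper intends for this corollary (which it leaves implicit): non-vanishing of both residuals forces $S_1=S_2=S$ via the last alternative of Theorem \ref{ln}, the lemma on points of $S_1\cap S_2$ combined with Lemma \ref{residuo} then kills one residual, and testing the weak convergence of $\widetilde V_ie^{w_{i,n}}$ against $1$ with the normalization $\int_\Sigma\widetilde V_ie^{w_{i,n}}dv_g=\rho_{i,n}$ gives $\ov\rho_i=\sum_{x\in S_i}\sigma_i(x)$. Your extra bookkeeping about which alternative of Theorem \ref{ln} holds for each index is a sound (and welcome) explicit justification of the step $S_1=S_2=S$.
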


\begin{proof}[Proof of Theorems $\ref{comp}$ and $\ref{compsing}$]$ $\\
Let $u_n$ be a sequence of solutions of $\eqref{toda1}$ with $\rho_i=\rho_{i,n}\underset{n\to+\infty}\ra\ov\rho_i$ and $\int_\Sigma u_{1,n}dv_g=\int_\Sigma u_{2,n}dv_g=0$ and let $w_{i,n}$ be defined by $\eqref{wi}$.\\
If both $w_{1,n}$ and $w_{2,n}$ are bounded from above, then by standard estimates $u_n$ is bounded in $W^{2,p}(\Sigma)$, hence is compact in $H^1(\Sigma)$.\\
Otherwise, from Corollary $\ref{rhoi}$ we must have $\ov{\rho}_i = \sum_{x\in S_i} \sigma_i(x)$ for some $i\in\{1,2\}$. In the regular case, from Theorem $\ref{jlw}$ follows that $\rho_i$ must be an integer multiple of $4\pi$, hence the proof of Theorem $\ref{comp}$ is complete.\\
In the singular case, local blow-up values at regular points are still defined by $\eqref{sigma}$, whereas for any $j=1,\dots,l$ there exists a finite $\Gamma_j$ such that $(\sigma_1(p_j),\sigma_2(p_j))\in\Gamma_j$. Therefore, it must hold
$$\rho_i\in\Lambda_i:=\left\{4\pi k+\sum_{j=1}^ln_j\sigma_j,\;k\in\mathbb N,\;n_j\in\{0,1\},\;\sigma_j\in\Pi_i(\Gamma_j)\right\},$$
where $\Pi_i$ is the projection on the $i^{th}$ component; being $\Lambda_i$ discrete we can also conclude the proof of Theorem $\ref{compsing}$.
\end{proof}

\section*{Acknowledgments}
The authors have been supported by the PRIN project \textit{Variational and perturbative aspects of nonlinear differential problems}.

\bibliographystyle{abbrv}
\bibliography{finale}

\end{document}